\newtheorem{theorem}{Theorem}[section]
\newtheorem{lemma}[theorem]{Lemma}
\newtheorem{corollary}[theorem]{Corollary}
\theoremstyle{definition}
\theoremstyle{remark}
\newtheorem*{acknowledgement}{Acknowledgement}
\DeclareMathOperator{\diam}{diam}
\DeclareMathOperator{\conv}{conv}
\DeclareMathOperator{\cl}{cl}
\begin{document}
\title[The fixed point property in direct sums]{The fixed point property in
direct sums \\
and modulus $R(a,X)$}
\author[A. Wi\'{s}nicki]{Andrzej Wi\'{s}nicki}

\begin{abstract}
We show that the direct sum $(X_{1}\oplus ...\oplus X_{r})_{\psi }$ with a
strictly monotone norm has the weak fixed point property for nonexpansive
mappings whenever $M(X_{i})>1$ for each $i$ $=1,...,r.$ In particular, $%
(X_{1}\oplus ...\oplus X_{r})_{\psi }$ enjoys the fixed point property if
Banach spaces $X_{i}$ are uniformly nonsquare. This combined with the
earlier results gives a definitive answer for $r=2$: the direct sum\ $%
X_{1}\oplus _{\psi }X_{2}$ of uniformly nonsquare spaces with any monotone
norm has FPP. Our results are extended for asymptotically nonexpansive
mappings in the intermediate sense.
\end{abstract}

\subjclass[2010]{Primary: 47H10; Secondary: 46B20, 47H09}
\keywords{Nonexpansive mapping; Asymptotically nonexpansive mapping; Fixed
point; Direct sum}
\address{Andrzej Wi\'{s}nicki, Institute of Mathematics, Maria Curie-Sk\l %
odowska University, 20-031 Lublin, Poland}
\email{awisnic@hektor.umcs.lublin.pl}
\date{October 29, 2012}
\maketitle

\section{Introduction}

A Banach space $X$ is said to have the fixed point property (FPP, for short)
if every nonexpansive mapping $T:C\rightarrow C,$ i.e.,
\begin{equation*}
\Vert Tx-Ty\Vert \leq \Vert x-y\Vert ,\ x,y\in C,
\end{equation*}%
acting on a (nonempty) bounded closed and convex subset $C$ of $X$ has a
fixed point. A Banach space $X$ is said to have the weak fixed point
property (WFPP, for short) if we additionally assume that $C$ is weakly
compact. The fixed point theorem of W. A. Kirk \cite{Ki} asserts that every
Banach space with weak normal structure has WFPP. Recall that a Banach space
$X$ has weak normal structure if $r(C)<\diam C$ for all weakly compact
convex subsets $C$ of $X$ consisting of more than one point, where $%
r(C)=\inf_{x\in C}\sup_{x\in C}\Vert x-y\Vert $ is the Chebyshev radius of $C
$. For more information regarding metric fixed point theory we refer the
reader to \cite{AyDoLo, GoKi, KiSi}.

The permanence properties of normal structure and other conditions which
guarantee the fixed point property under the direct sum operation has been
studied extensively since the 1968 Belluce--Kirk--Steiner theorem \cite{BKS}%
, which states that a direct sum of two Banach spaces with normal structure,
endowed with the maximum norm, also has normal structure. Nowadays, there
exist many results concerning permanence properties of conditions which
imply normal structure (see \cite{DhSa} for a survey). However, the problem
is more difficult if at least one of spaces lacks weak normal structure (see
\cite{Wi2} and references therein) and it is quite well understood only for
nonexpansive mappings defined on rectangles \thinspace $C_{1}\times C_{2}$
(see \cite{KiMa, Ku}).

It was recently proved in \cite{Wi2} that if a Banach space $X$ has WFPP and
$Y$ has the generalized Gossez-Lami Dozo property (which is a slightly
stronger property than weak normal structure) or is uniformly convex in
every direction, then the direct sum $X\oplus Y$ with a strictly monotone
norm has WFPP. The present paper is concerned with direct sums $(X_{1}\oplus
...\oplus X_{r})_{\psi }$ of Banach spaces with $M(X_{i})>1$ (see Section 2
for the definitions of a $\psi $-direct sum, $M(X)$ and $R(a,X)$).
Dhompongsa, Kaewcharoen and Kaewkhao \cite{DKK} proved that $(X_{1}\oplus
...\oplus X_{r})_{\psi }$ has WFPP whenever $M(X_{i})>1$ for each $i=1,...,r$
and $\psi \in \Psi _{r}$ is strictly convex. Kato and Tamura \cite{KaTa1}
proved that if $\psi \neq \psi _{1}$ then $R(X_{1}\oplus _{\psi }X_{2})<2$
iff $R(X_{1})<2$ and $R(X_{2})<2$. Subsequently, Kato and Tamura \cite{KaTa2}
showed that $R(a,(X_{1}\oplus ...\oplus X_{r})_{\infty })=\max_{1\leq i\leq
r}R(a,X_{i})$ and, consequently, $(X_{1}\oplus ...\oplus X_{r})_{\infty }$
has WFPP whenever $M(X_{i})>1$ for each $i$.

In this paper we show that $(X_{1}\oplus ...\oplus X_{r})_{\psi }$ has WFPP
if $M(X_{i})>1$ for each $i=1,...,r$ and the norm $\Vert \cdot \Vert _{\psi }
$ is strictly monotone. In paricular, $(X_{1}\oplus ...\oplus X_{r})_{\psi }$
has FPP if Banach spaces $X_{i}$ are uniformly nonsquare and $\Vert \cdot
\Vert _{\psi }$ is strictly monotone. This combined with the aforementioned
results gives a definitive answer for $r=2$: the direct sum\ $X_{1}\oplus
_{\psi }X_{2}$ of uniformly nonsquare spaces with any monotone norm has the
fixed point property. Theorems \ref{asympt} and \ref{asympt1} extend our
results for asymptotically nonexpansive mappings in the intermediate sense.

\section{Preliminaries}

The modulus $R(a,X)$ of a Banach space $X$ was defined by Dom\'{\i}nguez
Benavides \cite{Do1} as a generalization of the coefficient $R(X)$
introduced by Garc\'{\i}a Falset \cite{Ga1}. Recall that, for a given $a\geq
0,$%
\begin{equation*}
R(a,X)=\sup \{\liminf_{n\rightarrow \infty }\left\Vert x_{n}+x\right\Vert \},
\end{equation*}%
where the supremum is taken over all $x\in X$ with $\left\Vert x\right\Vert
\leq a$ and all weakly null sequences in the unit ball $B_{X}$ such that
\begin{equation*}
D[(x_{n})]=\limsup_{n\rightarrow \infty }\limsup_{m\rightarrow \infty
}\left\Vert x_{n}-x_{m}\right\Vert \leq 1.
\end{equation*}%
Define
\begin{equation*}
M(X)=\sup \left\{ \frac{1+a}{R(a,X)}:a\geq 0\right\} .
\end{equation*}%
Then the condition $M(X)>1$ implies that $X$ has the weak fixed point
property for nonexpansive mappings (see \cite{Do1}). We shall need the
following characterisation of $M(X)$ proved in \cite[Lemma 4.3]{BeWi} (see
also \cite[Corollary 4.3]{GLM}).

\begin{lemma}
\label{2.1} Let $X$ be a Banach space. The following conditions are
equivalent:

\begin{enumerate}
\item[(a)] $M(X)>1,$\smallskip

\item[(b)] there exists $a>0$ such that $R(a,X)<1+a,$\smallskip

\item[(c)] for every $a>0,$ $R(a,X)<1+a.$
\end{enumerate}
\end{lemma}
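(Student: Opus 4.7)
The plan is to prove the cycle (c) $\Rightarrow$ (a) $\Rightarrow$ (b) $\Rightarrow$ (c). The first implication is immediate from the definition of $M(X)$: if $R(a,X)<1+a$ for every $a>0$, then $M(X)\geq\frac{1+a}{R(a,X)}>1$. For (a) $\Rightarrow$ (b), I first establish the Lipschitz-type estimate
\[
R(b,X)\leq R(a,X)+(b-a)\quad\text{for } 0\leq a\leq b,
\]
by taking any weakly null $(x_n)$ in $B_X$ with $D[(x_n)]\leq 1$ and any $y$ with $\|y\|\leq b$, setting $y'=(a/b)y$ so that $\|y'\|\leq a$ and $\|y-y'\|\leq b-a$, and applying the triangle inequality inside $\liminf_n\|x_n+y\|\leq\liminf_n\|x_n+y'\|+(b-a)$. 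Then $M(X)>1$ yields some $a_0\geq 0$ with $R(a_0,X)<1+a_0$; if $a_0>0$ we are done, while if $a_0=0$, then $R(0,X)<1$ and the Lipschitz estimate gives $R(a,X)\leq R(0,X)+a<1+a$ for every $a>0$.

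The bulk of the work is (b) $\Rightarrow$ (c), which I plan to deduce from convexity of $a\mapsto R(a,X)$. The main obstacle is that $\liminf$ does not commute with convex combinations, so I first reduce to sequences admitting genuine limits. Specifically, I claim that in the supremum defining $R(a,X)$ one may restrict to weakly null $(x_n)$ along which $\|x_n+y\|$ converges for every $y\in X$. Given any candidate pair $((x_n),x)$, I pass to a subsequence along which $\|x_n+x\|$ converges to $\liminf_n\|x_n+x\|$, then apply a diagonal argument over a countable dense subset of the separable subspace $\overline{\mathrm{span}}(\{x_n\}\cup\{x\})$, and extend convergence to all $y\in X$ using the uniform $1$-Lipschitz bound $\bigl|\|x_n+y\|-\|x_n+z\|\bigr|\leq\|y-z\|$. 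The resulting subsequence inherits the constraints (weakly null, in $B_X$, $D\leq 1$) and preserves the value at $x$, so nothing is lost.

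For such a sequence, $f(y):=\lim_n\|x_n+y\|$ is convex as a pointwise limit of convex functions. For $a=\lambda a_1+(1-\lambda)a_2$ with $a>0$ and $\|y\|\leq a$, setting $y_i=(a_i/a)y$ gives $\|y_i\|\leq a_i$ and $y=\lambda y_1+(1-\lambda)y_2$; convexity of $f$ then yields $f(y)\leq\lambda f(y_1)+(1-\lambda)f(y_2)$. Taking the supremum over $\|y\|\leq a$ and then over admissible sequences shows that $R(\cdot,X)$ is convex on $[0,\infty)$.

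Finally, suppose $R(a_0,X)<1+a_0$ for some $a_0>0$. For $a\geq a_0$, the Lipschitz estimate already gives $R(a,X)\leq R(a_0,X)+(a-a_0)<1+a$. For $0<a<a_0$, write $a=(1-\lambda)\cdot 0+\lambda a_0$ with $\lambda=a/a_0\in(0,1)$; noting that $R(0,X)\leq 1$ since $\|x_n\|\leq 1$, convexity yields
\[
R(a,X)\leq(1-\lambda)R(0,X)+\lambda R(a_0,X)<(1-\lambda)+\lambda(1+a_0)=1+a,
\]
which establishes (c). The hard part is the convexity step; once the reduction to sequences with genuine limits is in place, everything else is a short computation.
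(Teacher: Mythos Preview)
The paper does not prove this lemma; it quotes it from \cite[Lemma~4.3]{BeWi} (with a reference also to \cite[Corollary~4.3]{GLM}). So there is no in-paper argument to compare against, and your proposal has to be judged on its own.

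Your strategy---the Lipschitz estimate $R(b,X)\le R(a,X)+(b-a)$ for $a\ge a_0$, and convexity of $a\mapsto R(a,X)$ together with $R(0,X)\le 1$ for $0<a<a_0$---is the standard route and your execution is essentially correct. One claim should be weakened, though it does not damage the argument. The diagonal extraction plus the $1$-Lipschitz bound $\bigl|\,\|x_n+y\|-\|x_n+z\|\,\bigr|\le\|y-z\|$ gives convergence of $\|x_n+y\|$ only for $y$ in the closure of your countable dense set, i.e.\ for $y\in Z=\overline{\mathrm{span}}(\{x_n\}\cup\{x\})$, not for all $y\in X$. This is harmless here, because in the convexity step you only evaluate $f$ at $y=x$ and $y_i=(a_i/a)x$, all of which lie in $Z$; but the sentence ``extend convergence to all $y\in X$'' is not what you have actually shown. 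Either restrict the claim to $y\in Z$, or---more economically---pass to a subsequence along which $\|x_n+tx\|$ converges for every $t\in\mathbb{R}$ (diagonalize over $t\in\mathbb{Q}$ and use Lipschitz continuity in $t$), which is all the convexity inequality $f(y)\le\lambda f(y_1)+(1-\lambda)f(y_2)$ requires.
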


Let us now recall terminology concerning direct sums. A norm $\left\Vert
\cdot \right\Vert $ on $\mathbb{R}^{n}$ is said to be monotone if
\begin{equation*}
\Vert (x_{1},...,x_{n})\Vert \leq \Vert (y_{1},...,y_{n})\Vert
\end{equation*}%
whenever $\left\vert x_{1}\right\vert \leq \left\vert y_{1}\right\vert
,...,\left\vert x_{n}\right\vert \leq \left\vert y_{n}\right\vert $. A norm $%
\left\Vert \cdot \right\Vert $ is said to be strictly monotone if
\begin{equation*}
\Vert (x_{1},...,x_{n})\Vert <\Vert (y_{1},...,y_{n})\Vert
\end{equation*}%
whenever $\left\vert x_{i}\right\vert \leq \left\vert y_{i}\right\vert $ for
$i=1,...,n$ and $\left\vert x_{i_{0}}\right\vert <\left\vert
y_{i_{0}}\right\vert $ for some $i_{0}.$ It is easy to see that $\ell
_{p}^{n}$-norms, $1\leq p<\infty ,$ are strictly monotone. A norm $%
\left\Vert \cdot \right\Vert $ on $\mathbb{R}^{n}$ is said to be absolute if
\begin{equation*}
\Vert (x_{1},...,x_{n})\Vert =\Vert (\left\vert x_{1}\right\vert
,...,\left\vert x_{n}\right\vert )\Vert
\end{equation*}%
for every $(x_{1},x_{2},...,x_{n})\in \mathbb{R}^{n}.$ It is well known (see
\cite{BaStWi}) that a norm is absolute iff it is monotone.

We will assume that the norm is normalized, i.e.,
\begin{equation*}
\Vert (1,0,...,0)\Vert =...=\Vert (0,...,0,1)\Vert =1.
\end{equation*}%
Bonsal and Duncan \cite{BoDu} showed that the set of all absolute and
normalized norms on $\mathbb{R}^{2}$ $(\mathbb{C}^{2})$ is in one-to-one
correspondence with the set $\Psi $ of all continuous convex functions on $%
[0,1]$ satisfying $\psi (0)=\psi (1)=1$ and $\max \{1-t,t\}\leq \psi (t)\leq
1$ for $0\leq t\leq 1,$ where the correspondence is given by
\begin{equation}
\psi (t)=\left\Vert (1-t,t)\right\Vert ,\ 0\leq t\leq 1.  \label{n1}
\end{equation}%
Conversely, for any $\psi \in \Psi $ define%
\begin{equation*}
\Vert (x_{1},x_{2})\Vert _{\psi }=(\left\vert x_{1}\right\vert +\left\vert
x_{2}\right\vert )\psi (\left\vert x_{2}\right\vert /\left\vert
x_{1}\right\vert +\left\vert x_{2}\right\vert )
\end{equation*}%
for $(x_{1},x_{2})\neq (0,0)$ and $\Vert (0,0)\Vert _{\psi }=0.$ Then $\Vert
\cdot \Vert _{\psi }$ is an absolute and normalized norm which satisfies (%
\ref{n1}) (see \cite{BoDu, SaKaTa}). For example, the $\ell _{p}^{2}$-norms
correspond to the functions%
\begin{equation*}
\psi _{p}(t)=\left\{
\begin{array}{ll}
((1-t)^{p}+t^{p})^{1/p}\ \  & \text{if }1\leq p<\infty , \\
\max \{1-t,t\} & \text{if }p=\infty .%
\end{array}%
\right.
\end{equation*}%
It was proved in \cite[Corollary 3]{TaKaSa2} that a norm $\Vert \cdot \Vert
_{\psi }$ in $\mathbb{R}^{2}$ is normalized and strictly monotone iff
\begin{equation*}
\psi (t)>\psi _{\infty }(t)
\end{equation*}%
for all $0<t<1.$

Saito, Kato and Takahashi \cite{SaKaTa2} generalized the result of \cite%
{BoDu} to the $n$-dimensional case. Let%
\begin{equation*}
\triangle _{n}=\{(s_{1},...,s_{n-1})\in \mathbb{R}^{n-1}:s_{1}+...+s_{n-1}%
\leq 1,s_{i}\geq 0,i=1,...,n-1\}
\end{equation*}%
and denote by $\Psi _{n}$ the set of all continuous convex functions on $%
\triangle _{n}$ which satisfy the following conditions:

\begin{align*}
\psi (1,0,...,0) & = \psi (0,1,0,...,0)=...=\psi (0,...,0,1)=1, \\
\psi (s_{1},...,s_{n-1}) & \geq (s_{1}+...+s_{n-1})\psi (\frac{s_{1}}{%
s_{1}+...+s_{n-1}},...,\frac{s_{n-1}}{s_{1}+...+s_{n-1}}), \\
\psi (s_{1},...,s_{n-1}) & \geq (1-s_{1})\psi (0,\frac{s_{2}}{1-s_{1}},...,%
\frac{s_{n-1}}{1-s_{1}}), \\
\vdots \\
\psi (s_{1},...,s_{n-1}) & \geq (1-s_{n-1})\psi (\frac{s_{1}}{1-s_{n-1}},...,%
\frac{s_{n-2}}{1-s_{n-1}},0).
\end{align*}

Then the set of all absolute and normalized norms on $\mathbb{R}^{n}$ $(%
\mathbb{C}^{n})$ is in one-to-one correspondence with the set $\Psi _{n},$
where the correspondence is given by%
\begin{equation}
\psi (s_{1},...,s_{n-1})=\left\Vert
(1-\sum\nolimits_{i=1}^{n-1}s_{i},s_{1},...,s_{n-1})\right\Vert ,\
(s_{1},...,s_{n-1})\in \triangle _{n},  \label{n2}
\end{equation}%
and if $\psi \in \Psi _{n}$ then%
\begin{align*}
\Vert (x_{1},x_{2},...,x_{n})\Vert _{\psi } = & \ (\left\vert
x_{1}\right\vert +\left\vert x_{2}\right\vert +...+\left\vert
x_{n}\right\vert )\times \\
& \psi (\frac{\left\vert x_{2}\right\vert }{\left\vert x_{1}\right\vert
+\left\vert x_{2}\right\vert +...+\left\vert x_{n}\right\vert },...,\frac{%
\left\vert x_{n}\right\vert }{\left\vert x_{1}\right\vert +\left\vert
x_{2}\right\vert +...+\left\vert x_{n}\right\vert }),
\end{align*}
defined for $(x_{1},x_{2},...,x_{n})\neq (0,0,...,0),\Vert (0,0,...,0)\Vert
_{\psi }=0,$ is an absolute and normalized norm which satisfies (\ref{n2})
(see \cite[Theorem 3.4]{SaKaTa2}).

Let $X_{1},...,X_{r}$ be Banach spaces and $\psi \in \Psi _{r}$. Following
\cite{TaKaSa2}, we shall write $(X_{1}\oplus ...\oplus X_{r})_{\psi }$ for
the $\psi $-direct sum with the norm $\Vert (x_{1},...,x_{r})\Vert _{\psi
}=\Vert (\Vert x_{1}\Vert ,...,\Vert x_{r}\Vert )\Vert _{\psi }$, where $%
(x_{1},...,x_{r})\in X_{1}\times ...\times X_{r}$.

\section{Fixed point theorems}

Let $T:C\rightarrow C$ be a nonexpansive mapping, where $C$ is a nonempty
weakly compact convex subset of a Banach space $X$. By the Kuratowski--Zorn
lemma, there exists a minimal (in the sense of inclusion) convex and weakly
compact set $K\subset C$ which is invariant under $T.$ Let $(x_{n})$ be an
approximate fixed point sequence for $T$ in $K$, i.e., $\lim_{n\rightarrow
\infty }\Vert Tx_{n}-x_{n}\Vert =0.$ The Goebel-Karlovitz lemma (see \cite%
{Go, Ka}) asserts that
\begin{equation*}
\lim_{n\rightarrow \infty }\Vert x_{n}-x\Vert =\diam K
\end{equation*}%
for every $x\in K$.

Suppose now that $X_{1},X_{2},...,X_{r}$ are Banach spaces, $\psi \in \Psi
_{r}$ and $T:K\rightarrow K$ is a nonexpansive mapping acting on a weakly
compact convex and minimal invariant subset $K$ of a direct sum $%
(X_{1}\oplus ...\oplus X_{r})_{\psi }.$ Suppose further that $\diam K=1$ and
$(w_{n})=((x_{n}^{(1)},...,x_{n}^{(r)}))$ is an approximate fixed point
sequence for $T$ in $K$ weakly converging to $(0,...,0)\in K$ such that $%
\lim_{n\rightarrow \infty }\Vert x_{n}^{(r)}\Vert =0.$ The following
construction (for $r=2$) was proposed in \cite{Wi2}. Fix an integer $k\geq 1$
and a sequence $(\varepsilon _{n})$ in $(0,1).$ Since $\Vert
Tw_{n}-w_{n}\Vert _{\psi }$ and $\Vert x_{n}^{(r)}\Vert $ converge to $0,$
we can choose $w_{n_{1}}$ in such a way that $\Vert
Tw_{n_{1}}-w_{n_{1}}\Vert _{\psi }<\varepsilon _{1}$ and $\Vert
x_{n_{1}}^{(r)}\Vert <\varepsilon _{1}.$ Let us put
\begin{equation*}
D_{1}^{1}=\left\{ w_{n_{1}}\right\} ,D_{2}^{1}=\conv(\{w_{n_{1}},Tw_{n_{1}}%
\}),...,D_{k}^{1}=\conv(D_{k-1}^{1}\cup T(D_{k-1}^{1})).
\end{equation*}%
Thus we obtain a family $D_{1}^{1}\subset D_{2}^{1}\subset ...\subset
D_{k}^{1}$ of relatively compact convex subsets of $K.$ It follows from the
Goebel-Karlovitz lemma and the relative compactness of $D_{k}^{1}$ that
there exists $n_{2}>n_{1}$ such that $\Vert Tw_{n_{2}}-w_{n_{2}}\Vert _{\psi
}<\varepsilon _{2}$, $\Vert x_{n_{2}}^{(r)}\Vert <\varepsilon _{2}$ and $%
\Vert w_{n_{2}}-z\Vert _{\psi }>1-\varepsilon _{2}$ for all $z\in D_{k}^{1}.$
Put%
\begin{equation*}
D_{1}^{2}=\conv(\{w_{n_{1}},w_{n_{2}}\}),...,D_{k}^{2}=\conv(D_{k-1}^{2}\cup
T(D_{k-1}^{2})).
\end{equation*}%
Again, we can find $n_{3}>n_{2}$ such that $\Vert Tw_{n_{3}}-w_{n_{3}}\Vert
_{\psi }<\varepsilon _{3}$, $\Vert x_{n_{3}}^{(r)}\Vert <\varepsilon _{3}$
and $\Vert w_{n_{3}}-z\Vert _{\psi }>1-\varepsilon _{3}$ for all $z\in
D_{k}^{2}.$ Put%
\begin{equation*}
D_{1}^{3}=\conv(\{w_{n_{1}},w_{n_{2}},w_{n_{3}}\}),...,D_{k}^{3}=\conv%
(D_{k-1}^{3}\cup T(D_{k-1}^{3})).
\end{equation*}%
Continuing in this fashion, we obtain by induction a subsequence $%
(w_{n_{j}}) $ of $(w_{n})$ and a family $\left\{ D_{j}^{i}\right\} _{1\leq
j\leq k,i\geq 1}$ of subsets of $K$ such that

\begin{enumerate}
\item[(i)] $\Vert Tw_{n_{i}}-w_{n_{i}}\Vert _{\psi }<\varepsilon _{i},$

\item[(ii)] $\Vert x_{n_{i}}^{(r)}\Vert <\varepsilon _{i},$

\item[(iii)] $\Vert w_{n_{i}}-z\Vert _{\psi }>1-\varepsilon _{i}$ for all $%
z\in D_{k}^{i-1},$ (where $D_{k}^{0}=\emptyset $),

\item[(iv)] $D_{1}^{i}=\conv(\left\{
w_{n_{1}},w_{n_{2}},...,w_{n_{i}}\right\} ),$

\item[(v)] $D_{j+1}^{i}=\conv(D_{j}^{i}\cup T(D_{j}^{i})),$
\end{enumerate}

for every $i\geq 1$ and $j=1,...,k-1$ (see \cite[Lemma 3.1]{Wi2}).

Furthermore (see \cite[Lemma 3.2]{Wi2}), for every $i\geq 1,$ $j=1,...,k$
and $u\in D_{j}^{i+1}$, there exists $z\in D_{j}^{i}$ such that
\begin{equation*}
\left\Vert z-u\right\Vert _{\psi }+\left\Vert u-w_{n_{i+1}}\right\Vert
_{\psi }\leq \left\Vert z-w_{n_{i+1}}\right\Vert _{\psi }+3(j-1)\varepsilon
_{i+1}.
\end{equation*}

We need the following simple observation regarding the set $D_{k}^{1}.$

\begin{lemma}
\label{pom}If $u=(y^{(1)},...,y^{(r)})\in D_{k}^{1}$ then $\left\Vert
y^{(r)}\right\Vert <k\varepsilon _{1}.$
\end{lemma}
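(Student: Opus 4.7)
The plan is a two-step argument. First, I will bound $\|u-w_{n_{1}}\|_{\psi}$ for every $u\in D_{k}^{1}$ by a quantity linear in $k$, using only nonexpansiveness of $T$ and convexity of the norm. Second, I will use the fact that $\|\cdot\|_{\psi}$ on $\mathbb{R}^{r}$ is monotone and normalized to transfer this bound to the last Banach-space coordinate.

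Write $\delta=\|Tw_{n_{1}}-w_{n_{1}}\|_{\psi}$, so that $\delta<\varepsilon_{1}$ by (i). Set $d_{j}=\sup\{\|u-w_{n_{1}}\|_{\psi}:u\in D_{j}^{1}\}$. I claim by induction that $d_{j}\leq(j-1)\delta$. The base $d_{1}=0$ is immediate since $D_{1}^{1}=\{w_{n_{1}}\}$. For the inductive step, any $u\in D_{j+1}^{1}=\conv(D_{j}^{1}\cup T(D_{j}^{1}))$ is a convex combination of points $v\in D_{j}^{1}$ (with $\|v-w_{n_{1}}\|_{\psi}\leq d_{j}$) and points $Tv$ with $v\in D_{j}^{1}$, for which nonexpansiveness and the triangle inequality yield
\[
\|Tv-w_{n_{1}}\|_{\psi}\leq\|Tv-Tw_{n_{1}}\|_{\psi}+\|Tw_{n_{1}}-w_{n_{1}}\|_{\psi}\leq\|v-w_{n_{1}}\|_{\psi}+\delta\leq d_{j}+\delta.
\]
Convexity of $\|\cdot\|_{\psi}$ transfers the bound to the convex hull, giving $d_{j+1}\leq d_{j}+\delta$ and closing the induction.

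For the second step, since $\|\cdot\|_{\psi}$ on $\mathbb{R}^{r}$ is absolute and normalized, monotonicity yields, for any nonnegative $(a_{1},\ldots,a_{r})$,
\[
a_{r}=\|(0,\ldots,0,a_{r})\|_{\psi}\leq\|(a_{1},\ldots,a_{r})\|_{\psi}.
\]
Taking $a_{i}=\|y^{(i)}-x_{n_{1}}^{(i)}\|$ and using the definition of the $\psi$-direct sum norm, this gives $\|y^{(r)}-x_{n_{1}}^{(r)}\|\leq\|u-w_{n_{1}}\|_{\psi}\leq d_{k}\leq(k-1)\delta<(k-1)\varepsilon_{1}$. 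Combined with (ii),
\[
\|y^{(r)}\|\leq\|x_{n_{1}}^{(r)}\|+\|y^{(r)}-x_{n_{1}}^{(r)}\|<\varepsilon_{1}+(k-1)\varepsilon_{1}=k\varepsilon_{1},
\]
which is the desired conclusion.

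There is no substantive obstacle: the proof rests on the interaction of three easy ingredients, namely nonexpansiveness of $T$ (which forces the displacement from $w_{n_{1}}$ to grow by at most $\delta$ per application of $T$), convexity of the norm (so convex hulls do not enlarge the bound), and monotonicity of the outer $\psi$-norm (so a $\psi$-norm bound controls each individual coordinate). The only step requiring any care is the induction defining $d_{j}$, which is routine.
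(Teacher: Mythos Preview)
Your proof is correct and follows essentially the same approach as the paper: an induction on $j$ to bound $\|u-w_{n_1}\|_\psi$ for $u\in D_j^1$ by a quantity linear in $j$, followed by the observation that each coordinate norm is dominated by the $\psi$-norm, and then the triangle inequality together with $\|x_{n_1}^{(r)}\|<\varepsilon_1$. The only cosmetic differences are that you start the induction at $j=1$ (with $d_1=0$) rather than $j=2$, and you carry the sharper constant $\delta=\|Tw_{n_1}-w_{n_1}\|_\psi$ through the induction before invoking $\delta<\varepsilon_1$ at the end; neither changes the substance.
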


\begin{proof}
We first show by induction that for every $j\in \{2,...,k\}$ and for every $%
v\in D_{j}^{1},$%
\begin{equation}
\left\Vert v-w_{n_{1}}\right\Vert _{\psi }<(j-1)\varepsilon _{1}.
\label{pom1}
\end{equation}%
For $j=2,$ $D_{2}^{1}=\conv(\{w_{n_{1}},Tw_{n_{1}}\})$ and the inequality (%
\ref{pom1}) follows from the fact that $\Vert Tw_{n_{1}}-w_{n_{1}}\Vert
_{\psi }<\varepsilon _{1}.$

Now fix $j\in \{2,...,k-1\}$ and assume that for every $u\in D_{j}^{1},$ $%
\left\Vert u-w_{n_{1}}\right\Vert _{\psi }<(j-1)\varepsilon _{1}.$ Since%
\begin{equation*}
D_{j+1}^{1}=\conv(D_{j}^{1}\cup T(D_{j}^{1})),
\end{equation*}%
it is enough to show that $\left\Vert v-w_{n_{1}}\right\Vert _{\psi
}<j\varepsilon _{1}$for every $v\in T(D_{j}^{1}).$ Let $v=Tu$ for some $u\in
D_{j}^{1}.$ Then%
\begin{equation*}
\left\Vert v-w_{n_{1}}\right\Vert _{\psi }\leq \left\Vert
Tu-Tw_{n_{1}}\right\Vert _{\psi }+\left\Vert Tw_{n_{1}}-w_{n_{1}}\right\Vert
_{\psi }\leq \left\Vert u-w_{n_{1}}\right\Vert _{\psi }+\varepsilon
_{1}<j\varepsilon _{1},
\end{equation*}%
and the proof of (\ref{pom1}) is complete.

Let $w_{n_{1}}=(x_{n_{1}}^{(1)},...,x_{n_{1}}^{(r)}),u=(y^{(1)},...,y^{(r)})%
\in D_{k}^{1}.$ It follows from (\ref{pom1}) that%
\begin{equation*}
\left\Vert y^{(r)}-x_{n_{1}}^{(r)}\right\Vert \leq \left\Vert
u-w_{n_{1}}\right\Vert _{\psi }<(k-1)\varepsilon _{1}
\end{equation*}%
and consequently, since $\left\Vert x_{n_{1}}^{(r)}\right\Vert <\varepsilon
_{1}$, $\left\Vert y^{(r)}\right\Vert <k\varepsilon _{1}.$
\end{proof}

Notice that if the norm $\Vert \cdot \Vert _{\psi }$ on $\mathbb{R}^{r}$ is
strictly monotone then for every $\varepsilon >0,$ there exists $\delta
(\varepsilon )>0$ such that if $(t^{(1)},...,t^{(r-1)},t^{\prime
}),(t^{(1)},...,t^{(r-1)},t^{\prime \prime })$ belong to the unit ball of $(%
\mathbb{R}^{r},\Vert \cdot \Vert _{\psi })$ and
\begin{equation*}
\left\Vert (t^{(1)},...,t^{(r-1)},t^{\prime })\right\Vert _{\psi
}<\left\Vert (t^{(1)},...,t^{(r-1)},t^{\prime \prime })\right\Vert _{\psi
}+\delta (\varepsilon ),
\end{equation*}%
then $\left\vert t^{\prime }\right\vert <\left\vert t^{\prime \prime
}\right\vert +\varepsilon .$ Indeed, otherwise, there exist $\varepsilon
_{0}>0$ and two sequences $((t_{n}^{(1)},...,t_{n}^{(r-1)},t_{n}^{\prime
})),((t_{n}^{(1)},...,t_{n}^{(r-1)},t_{n}^{\prime \prime }))$ in the unit
ball such that
\begin{equation*}
\left\Vert (t_{n}^{(1)},...,t_{n}^{(r-1)},t_{n}^{\prime })\right\Vert _{\psi
}<\left\Vert (t_{n}^{(1)},...,t_{n}^{(r-1)},t_{n}^{\prime \prime
})\right\Vert _{\psi }+\frac{1}{n}
\end{equation*}%
and $\left\vert t_{n}^{\prime }\right\vert \geq \left\vert t_{n}^{\prime
\prime }\right\vert +\varepsilon _{0}.$ Passing to convergent subsequences
and taking limits, we have
\begin{equation*}
\left\Vert (g^{(1)},...,g^{(r-1)},g^{\prime })\right\Vert _{\psi }\leq
\left\Vert (g^{(1)},...,g^{(r-1)},g^{\prime \prime })\right\Vert _{\psi }
\end{equation*}%
and $\left\vert g^{\prime }\right\vert \geq \left\vert g^{\prime \prime
}\right\vert +\varepsilon _{0}$ which contradicts the strict monotonicity of
$\Vert \cdot \Vert _{\psi }.$

Thus, using Lemma \ref{pom}, we can follow the arguments in \cite[Lemma 3.3]%
{Wi2} and obtain the following result.

\begin{lemma}
\label{close2}Let $X_{1},X_{2},...,X_{r}$ be Banach spaces, $\psi \in \Psi
_{r},$ and $T:K\rightarrow K$ a nonexpansive mapping acting on a weakly
compact convex and minimal invariant subset $K$ of a direct sum $%
(X_{1}\oplus ...\oplus X_{r})_{\psi }$ with a strictly monotone norm $\Vert
\cdot \Vert _{\psi }$. Suppose that $\diam K=1$ and $%
(w_{n})=((x_{n}^{(1)},...,x_{n}^{(r)}))$ is an approximate fixed point
sequence for $T$ in $K$ weakly converging to $(0,...,0)\in K$ such that $%
\lim_{n\rightarrow \infty }\Vert x_{n}^{(r)}\Vert =0.$ Then for every
positive integer $k$, there exist a sequence $(\varepsilon _{n})$ in $(0,1),$
a subsequence $(w_{n_{j}})$ of $(w_{n})$ and a family $\left\{
D_{j}^{i}\right\} _{1\leq j\leq k,i\geq 1}$ of subsets of $K$ (depending on $%
k$) such that the above conditions (i)-(v) are satisfied and $\left\Vert
y^{(r)}\right\Vert <\frac{1}{k}$ for every $u=(y^{(1)},...,y^{(r)})\in
\bigcup\nolimits_{i\mathbb{=}1}^{\infty }D_{k}^{i}.$
\end{lemma}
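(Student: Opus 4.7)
The plan is to run the construction of the subsequence $(w_{n_j})$ and the family $\{D_j^i\}$ recalled in the excerpt with the sequence $(\varepsilon_n)$ chosen sufficiently small as a function of $k$, and then to prove by induction on $i$ that there is a constant $M_i<1/k$ with $\|y^{(r)}\|<M_i$ for every $u=(y^{(1)},\dots,y^{(r)})\in D_k^i$. The base case $i=1$ is furnished directly by Lemma \ref{pom}: fixing $\varepsilon_1<1/(2k^2)$ gives $\|y^{(r)}\|<k\varepsilon_1\le 1/(2k)=:M_1$.

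For the inductive step, fix $u\in D_k^{i+1}$ and apply the inequality of \cite[Lemma~3.2]{Wi2} recalled in the excerpt with $j=k$ to obtain $z\in D_k^i$ satisfying
\begin{equation*}
\|z-u\|_\psi+\|u-w_{n_{i+1}}\|_\psi\le\|z-w_{n_{i+1}}\|_\psi+3(k-1)\varepsilon_{i+1}.
\end{equation*}
Set $a=z-u$, $b=u-w_{n_{i+1}}$, $c=a+b$, and let $A_l=\|a^{(l)}\|$, $B_l=\|b^{(l)}\|$, $C_l=\|c^{(l)}\|$. The componentwise triangle inequality in each $X_l$ yields $A+B\ge C$ in $\mathbb{R}^r$, and the triangle inequality in $(\mathbb{R}^r,\|\cdot\|_\psi)$ combined with the monotonicity of $\|\cdot\|_\psi$ gives
\begin{equation*}
\|A\|_\psi+\|B\|_\psi\ge\|A+B\|_\psi\ge\|C\|_\psi,
\end{equation*}
so the displayed inequality above forces $\|A+B\|_\psi-\|C\|_\psi\le 3(k-1)\varepsilon_{i+1}$. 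Introducing the auxiliary vector $\tilde C=(C_1,\dots,C_{r-1},A_r+B_r)$, which agrees with $C$ in the first $r-1$ coordinates and satisfies $C\le\tilde C\le A+B$, monotonicity then gives $\|\tilde C\|_\psi-\|C\|_\psi\le 3(k-1)\varepsilon_{i+1}$. After rescaling into the unit ball, the strict-monotonicity observation established immediately before the statement of Lemma \ref{close2} produces $A_r+B_r-C_r<\varepsilon'$ provided $\varepsilon_{i+1}$ is chosen so that $3(k-1)\varepsilon_{i+1}<\delta(\varepsilon')$.

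Finally, $C_r\le\|z^{(r)}\|+\|x_{n_{i+1}}^{(r)}\|<M_i+\varepsilon_{i+1}$ by the inductive hypothesis and condition (ii); averaging the two triangle-inequality bounds $\|u^{(r)}\|\le\|z^{(r)}\|+A_r$ and $\|u^{(r)}\|\le\|x_{n_{i+1}}^{(r)}\|+B_r$ then yields $\|u^{(r)}\|<M_i+\varepsilon_{i+1}+\varepsilon'/2$. Picking $\varepsilon'$ and $\varepsilon_{i+1}$ at each stage so that this increment is bounded by $2^{-(i+1)}/k$ keeps the cumulative bound under $M_1+\sum_{i\ge 1}2^{-(i+1)}/k=1/k$, closing the induction. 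The main obstacle is the passage from the almost-equality in the $\psi$-norm triangle inequality on $K$ to one localized to a single coordinate; the intermediate vector $\tilde C$ isolates the $r$-th coordinate so that the coordinatewise strict monotonicity of $\|\cdot\|_\psi$ (the uniform-continuity statement $\delta\mapsto\varepsilon$) can be invoked, after which the summable choice of $(\varepsilon_n)$ is routine.
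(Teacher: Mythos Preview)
Your proposal is correct and follows essentially the same route that the paper intends: the paper does not write out a detailed proof but simply says to combine Lemma~\ref{pom} (the base case) with the arguments of \cite[Lemma~3.3]{Wi2}, and the latter is precisely the induction on $i$ you carry out---use the near-equality in the $\psi$-norm triangle inequality supplied by \cite[Lemma~3.2]{Wi2}, pass via an intermediate vector to isolate the $r$-th coordinate, invoke the uniform $\delta(\varepsilon)$ coming from strict monotonicity, and choose $(\varepsilon_n)$ summable so the accumulated error stays below $1/k$. The only cosmetic points are that the rescaling into the unit ball introduces a harmless factor $1+3(k-1)\varepsilon_{i+1}$ which can be absorbed into the choice of $\varepsilon'$, and the whole sequence $(\varepsilon_n)$ can indeed be fixed in advance since your constraints on $\varepsilon_{i+1}$ depend only on $k$, $i$, and $\delta(\cdot)$.
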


We show that the appropriate approximate fixed point sequence $(w_{n})$
exists if $M(X_{i})>1,i=1,...,r.$

\begin{lemma}
\label{lem}Let $X_{1},...,X_{r}$ be Banach spaces with $M(X_{i})>1$ for each
$i=1,...,r$, $\psi \in \Psi _{r},$ and $T:K\rightarrow K$ a nonexpansive
mapping acting on a weakly compact convex and minimal invariant subset $K$
of a direct sum $(X_{1}\oplus ...\oplus X_{r})_{\psi }$ with a strictly
monotone norm $\Vert \cdot \Vert _{\psi }.$ Suppose that $\diam K=1$ and $%
(w_{n})=((x_{n}^{(1)},...,x_{n}^{(r)}))$ is an approximate fixed point
sequence for $T$ in $K$ weakly converging to $(0,...,0)\in K.$ Then $%
\liminf_{n\rightarrow \infty }\Vert x_{n}^{(i_{0})}\Vert =0$ for some $%
i_{0}\in \{1,...,r\}.$
\end{lemma}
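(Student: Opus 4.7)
The plan is to argue by contradiction: suppose that $\liminf_n \|x_n^{(i)}\|>0$ for every $i$, and pass to a subsequence so that $\|x_n^{(i)}\|\to a_i>0$ for each $i=1,\ldots,r$. Since $0\in K$ and $\diam K=1$, monotonicity of $\|\cdot\|_\psi$ yields $\|x_n^{(i)}\|\leq \|w_n\|_\psi\leq 1$, so each $(x_n^{(i)})_n$ is weakly null in $B_{X_i}$ with $D[(x_n^{(i)})]\leq 1$. The Goebel--Karlovitz lemma applied at $y=0$ gives $\lim_n \|w_n\|_\psi=1$, whence $\|(a_1,\ldots,a_r)\|_\psi=1$.

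For each $m$, the Goebel--Karlovitz lemma applied at $y=w_m\in K$ yields $\lim_n \|w_n-w_m\|_\psi=1$; by a diagonal extraction I may assume that the coordinate limits $\alpha_i^{(m)}:=\lim_n \|x_n^{(i)}-x_m^{(i)}\|$ exist, with $\|(\alpha_1^{(m)},\ldots,\alpha_r^{(m)})\|_\psi=1$. The definition of $R(\cdot,X_i)$, applied to the weakly null sequence $(-x_n^{(i)})\subset B_{X_i}$ with shift $x_m^{(i)}$, together with Lemma~\ref{2.1}(c), gives
\[
\alpha_i^{(m)}\leq R(\|x_m^{(i)}\|,X_i)<1+\|x_m^{(i)}\|,
\]
while weak lower semicontinuity of the norm gives $\alpha_i^{(m)}\geq \|x_m^{(i)}\|$. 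Letting $m\to\infty$ (and using Lipschitz continuity of $R(\cdot,X_i)$ to pass to the limit on the right), I would obtain numbers $\alpha_i$ satisfying $\|(\alpha_1,\ldots,\alpha_r)\|_\psi=1$ and $a_i\leq \alpha_i\leq R(a_i,X_i)<1+a_i$ for every $i$. Since $\|(a_1,\ldots,a_r)\|_\psi=1$ and $\alpha_i\geq a_i$ componentwise, the strict monotonicity of $\|\cdot\|_\psi$ forces $\alpha_i=a_i$ for every $i$.

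The main obstacle will be to convert this asymptotic equality into an outright contradiction, which is the step where the strict inequality $R(a_i,X_i)<1+a_i$ coming from $M(X_i)>1$ must be fully exploited (the previous paragraph uses only $\alpha_i^{(m)}\leq R(\|x_m^{(i)}\|,X_i)$, not the strictness). My plan is to repeat the argument at the convex combinations $y=\lambda w_m\in K$ ($\lambda\in (0,1)$, valid since $0,w_m\in K$ and $K$ is convex), which gives $\alpha_i^{(\lambda,m)}\leq R(\lambda\|x_m^{(i)}\|,X_i)<1+\lambda\|x_m^{(i)}\|$ with $\|(\alpha_i^{(\lambda,m)})_i\|_\psi=1$. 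Combining the data at $\lambda=1$ (where the equality $\alpha_i=a_i$ pins the coordinates down) with data at some $\lambda<1$, and invoking strict monotonicity of $\|\cdot\|_\psi$ once more, should produce a coordinate vector componentwise dominated by $(a_1,\ldots,a_r)$ with strict inequality in at least one coordinate but still of $\psi$-norm $1$, contradicting $\|(a_1,\ldots,a_r)\|_\psi=1$.
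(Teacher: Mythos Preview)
Your steps up through $\alpha_i=a_i$ are fine, but the ``plan'' in the final paragraph does not close the argument. Applying Goebel--Karlovitz at $y=\lambda w_m$ and the $R$-bound gives only $\alpha_i^{(\lambda)}\le R(\lambda a_i,X_i)$, together with the convexity of $\lambda\mapsto\lim_n\|x_n^{(i)}-\lambda x_m^{(i)}\|$, which after the endpoint information $\alpha_i^{(0)}=\alpha_i^{(1)}=a_i$ forces $\alpha_i^{(\lambda)}=a_i$ for all $\lambda$. This yields $a_i\le R(\lambda a_i,X_i)$ for every $\lambda\in[0,1]$, hence in particular $a_i\le R(0,X_i)$, which is no contradiction (one always has $R(a,X)\ge\max\{a,R(0,X)\}$, and $R(0,X)$ can be as large as $1$). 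The structural reason your scheme stalls is that the only sequence you work with, $(x_n^{(i)})$, is weakly null; after you pass to the iterated limit $m\to\infty$ the ``shift'' $\lambda x_m^{(i)}$ in the $R$-modulus disappears, so the strict gap $1+a-R(a,X_i)>0$ never gets a fixed positive lower bound to act on. No amount of varying $\lambda$ repairs this, because in the limit you are always comparing the sequence to its own tail.

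The paper avoids exactly this by manufacturing an auxiliary approximate fixed point sequence whose weak limit is \emph{nonzero}. One fixes $t=1-\tfrac{1}{3}d_1$ with $d_1=\min_i a_i$, picks a small $\gamma>0$, and lets $z_n$ be the Banach fixed point of the contraction $S_n x=(1-\gamma)Tx+\gamma t w_n$ on $K$. Then $\|Tz_n-z_n\|_\psi<\gamma$, so (via Goebel--Karlovitz) $\|z_n\|_\psi>1-\varepsilon$; moreover $\|z_n-z_m\|_\psi\le t$ and $\|z_n-w_n\|_\psi\le 1-t+o(1)$. Passing to a subsequence with weak limit $z=(y^{(1)},\dots,y^{(r)})$, one gets $\|z\|_\psi>1-\varepsilon-t>d_1/6$, so some coordinate satisfies $\|y^{(i_0)}\|>d_1/6r$. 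Now the $R$-bound is applied to the rescaled sequence $(y_n^{(i_0)}-y^{(i_0)})/d_2$ with shift $y^{(i_0)}/d_2$, where $d_2=\lim_{n\neq m}\|y_n^{(i_0)}-y_m^{(i_0)}\|\ge d_1/3$; since $\|y^{(i_0)}\|/d_2$ lies in the fixed interval $[d_1/6r,1]$, the quantity $A=\inf_{d_1/6r\le a\le 1}\min_i(1+a-R(a,X_i))>0$ gives a uniform gap $\lim_n\|y_n^{(i_0)}\|\le d_2+\|y^{(i_0)}\|-Ad_2$. Strict monotonicity of $\|\cdot\|_\psi$ then forces $\lim_n\|z_n\|_\psi<1-\varepsilon$, the desired contradiction. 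The contraction trick is precisely what produces a weak limit bounded away from zero; without it (or some substitute producing the same effect) the strict inequality from $M(X_i)>1$ cannot be leveraged.
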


\begin{proof}
Let $(w_{n})=((x_{n}^{(1)},...,x_{n}^{(r)}))$ be an approximate fixed point
sequence for $T$ in $K$ weakly converging to $(0,...,0)\in K$ and suppose,
contrary to our claim, that for each $i=1,...,r,$ $\liminf_{n\rightarrow
\infty }\Vert x_{n}^{(i)}\Vert >0.$ We can assume, passing to a subsequence,
that
\begin{equation*}
d_{1}=\min \{\lim_{n\rightarrow \infty }\Vert x_{n}^{(1)}\Vert
,...,\lim_{n\rightarrow \infty }\Vert x_{n}^{(r)}\Vert \}>0.
\end{equation*}

We claim that for every $\varepsilon >0$ there exists $\delta
_{1}(\varepsilon )>0$ such that if $v\in K$ and $\Vert Tv-v\Vert _{\psi
}<\delta _{1}(\varepsilon )$ then $\Vert v\Vert _{\psi }>1-\varepsilon $.
Indeed, otherwise, arguing as in \cite[Lemma 2]{Do3}, there exist $%
\varepsilon _{0}>0$ and a sequence $(v_{n})\subset K$ such that $\Vert
Tv_{n}-v_{n}\Vert _{\psi }<\frac{1}{n}$ and $\Vert v_{n}\Vert _{\psi }\leq
1-\varepsilon _{0}$ for every $n\in \mathbb{N}$. Thus $(v_{n})$ is an
approximate fixed point sequence in $K$, but $\limsup_{n\rightarrow \infty
}\Vert v_{n}\Vert _{\psi }\leq 1-\varepsilon _{0},$ which contradicts the
Goebel-Karlovitz lemma since $(0,...,0)\in K.$

Furthermore, since $(\mathbb{R}^{r},\left\Vert \cdot \right\Vert _{\psi })$
is finite dimensional and the norm $\left\Vert \cdot \right\Vert _{\psi }$
is strictly monotone, for every $\eta >0,$ there exists $\delta _{2}(\eta
)>0 $ such that if $(t^{(1)},...,t^{(r)}),(s^{(1)},...,s^{(r)})$ belong to
the unit ball, $\left\vert t^{(i)}\right\vert \leq \left\vert
s^{(i)}\right\vert $ for $i=1,...,r$ and $\left\vert t^{(i_{0})}\right\vert
<\left\vert s^{(i_{0})}\right\vert -\eta $ for some $i_{0},$ then
\begin{equation*}
\left\Vert (t^{(1)},...,t^{(r)})\right\Vert _{\psi }<\left\Vert
(s^{(1)},...,s^{(r)})\right\Vert _{\psi }-\delta _{2}(\eta ).
\end{equation*}

Let
\begin{equation*}
A=\inf_{d_{1}/6r\leq a\leq 1}\min_{1\leq i\leq r}(1+a-R(a,X_{i}))
\end{equation*}%
and take $0<\eta <Ad_{1}/3,\varepsilon <\min \{\delta _{2}(\eta ),d_{1}/6\}$
(notice that $A>0$ by Lemma \ref{2.1}). Choose $t=1-\frac{1}{3}d_{1},\gamma
<\min \{1,\delta _{1}(\varepsilon )\}$ and define the contraction $%
S_{n}:K\rightarrow K$ by%
\begin{equation*}
S_{n}x=(1-\gamma )Tx+\gamma tw_{n}.
\end{equation*}%
By the contractive mapping principle, for any $n\in \mathbb{N},$ there
exists a unique fixed point $z_{n}=(y_{n}^{(1)},...,y_{n}^{(r)})$ of $S_{n}.$
We can assume, passing to a subsequence, that $(z_{n})$ converges weakly to $%
z=(y^{(1)},...,y^{(r)})$ and the limits $\lim_{n\rightarrow \infty
}\left\Vert y_{n}^{(i)}\right\Vert ,\lim_{n,m\rightarrow \infty ,n\neq
m}\left\Vert y_{n}^{(i)}-y_{m}^{(i)}\right\Vert $ exist for each $i=1,...,r$
(see, e.g., \cite{SiSm}).

It is not difficult to see that%
\begin{equation}
\left\Vert Tz_{n}-z_{n}\right\Vert _{\psi }\leq \gamma <\delta
_{1}(\varepsilon )  \label{w1}
\end{equation}%
and hence%
\begin{equation}
\left\Vert z_{n}\right\Vert _{\psi }>1-\varepsilon   \label{eps}
\end{equation}%
for every $n\in \mathbb{N}$. Furthermore, for every $n,m\in \mathbb{N},$%
\begin{equation}
\left\Vert z_{n}-z_{m}\right\Vert _{\psi }\leq t  \label{w2}
\end{equation}%
and from the weak lower semicontinuity of the norm,%
\begin{equation*}
\left\Vert z_{n}-z\right\Vert _{\psi }\leq \liminf_{m\rightarrow \infty
}\left\Vert z_{n}-z_{m}\right\Vert _{\psi }\leq t.
\end{equation*}%
Thus%
\begin{equation*}
\left\Vert z\right\Vert _{\psi }\geq \left\Vert z_{n}\right\Vert _{\psi
}-\left\Vert z_{n}-z\right\Vert _{\psi }>1-\varepsilon -t>d_{1}/6
\end{equation*}%
and, consequently, there exists $i_{0}\in \{1,...,r\}$ such that
\begin{equation}
\left\Vert y^{(i_{0})}\right\Vert >d_{1}/6r  \label{odzera}
\end{equation}%
since, otherwise,%
\begin{equation*}
\left\Vert z\right\Vert _{\psi }\leq \left\Vert y^{(1)}\right\Vert
+\left\Vert y^{(2)}\right\Vert +...+\left\Vert y^{(r)}\right\Vert \leq
d_{1}/6.
\end{equation*}%
Furthermore,%
\begin{equation*}
\left\Vert z_{n}-w_{n}\right\Vert _{\psi }\leq (1-\gamma )\left\Vert
Tz_{n}-Tw_{n}\right\Vert _{\psi }+(1-\gamma )\left\Vert
Tw_{n}-w_{n}\right\Vert _{\psi }+\gamma (1-t)\left\Vert w_{n}\right\Vert
_{\psi }
\end{equation*}%
which gives%
\begin{equation*}
\left\Vert z_{n}-w_{n}\right\Vert _{\psi }\leq 1-t+\frac{(1-\gamma )}{\gamma
}\left\Vert Tw_{n}-w_{n}\right\Vert _{\psi }.
\end{equation*}%
It follows from the weak lower semicontinuity of the norm that for each $%
i=1,...,r,$%
\begin{equation*}
\left\Vert y^{(i)}\right\Vert \leq \liminf_{n\rightarrow \infty }\left\Vert
y_{n}^{(i)}-x_{n}^{(i)}\right\Vert \leq \liminf_{n\rightarrow \infty
}\left\Vert z_{n}-w_{n}\right\Vert _{\psi }\leq 1-t=\frac{1}{3}d_{1}
\end{equation*}%
and from the triangle inequality,%
\begin{equation*}
\left\Vert y_{n}^{(i)}-y^{(i)}\right\Vert \geq \left\Vert
x_{n}^{(i)}\right\Vert -\left\Vert x_{n}^{(i)}-y_{n}^{(i)}\right\Vert
-\left\Vert y^{(i)}\right\Vert .
\end{equation*}%
Hence%
\begin{equation}
\lim_{n\rightarrow \infty }\left\Vert y_{n}^{(i)}-y^{(i)}\right\Vert \geq
\frac{1}{3}d_{1}  \label{w3}
\end{equation}%
for each $i=1,...,r.$ Write%
\begin{equation*}
d_{2}=\lim_{n,m\rightarrow \infty ,n\neq m}\left\Vert
y_{n}^{(i_{0})}-y_{m}^{(i_{0})}\right\Vert
\end{equation*}%
and notice that
\begin{equation*}
d_{2}=\limsup_{n\rightarrow \infty }\limsup_{m\rightarrow \infty }\left\Vert
y_{n}^{(i_{0})}-y_{m}^{(i_{0})}\right\Vert \geq \lim_{n\rightarrow \infty
}\left\Vert y_{n}^{(i_{0})}-y^{(i_{0})}\right\Vert \geq \frac{1}{3}d_{1}
\end{equation*}%
by (\ref{w3}). It follows that
\begin{align*}
\lim_{n\rightarrow \infty }\left\Vert y_{n}^{(i_{0})}\right\Vert &
=\lim_{n\rightarrow \infty }\left\Vert \frac{y_{n}^{(i_{0})}-y^{(i_{0})}}{%
d_{2}}+\frac{y^{(i_{0})}}{d_{2}}\right\Vert d_{2} \\
& \leq R(\frac{\left\Vert y^{(i_{0})}\right\Vert }{d_{2}},X_{i_{0}})d_{2}%
\leq (1+\frac{\left\Vert y^{(i_{0})}\right\Vert }{d_{2}}-A)d_{2} \\
& \leq d_{2}+\left\Vert y^{(i_{0})}\right\Vert -Ad_{2}<d_{2}+\left\Vert
y^{(i_{0})}\right\Vert -\eta ,
\end{align*}%
since $d_{1}/6r\leq \left\Vert y^{(i_{0})}\right\Vert /d_{2}\leq 1.$
Therefore,
\begin{align*}
\lim_{n\rightarrow \infty }\left\Vert z_{n}\right\Vert _{\psi }& <\left\Vert
(\lim_{n\rightarrow \infty }\left\Vert y_{n}^{(1)}\right\Vert
,...,d_{2}+\left\Vert y^{(i_{0})}\right\Vert ,...,\lim_{n\rightarrow \infty
}\left\Vert y_{n}^{(r)}\right\Vert )\right\Vert _{\psi }-\delta _{2}(\eta )
\\
& \leq \left\Vert (\lim_{n\rightarrow \infty }\left\Vert
y_{n}^{(1)}-y^{(1)}\right\Vert ,...,\lim_{n\rightarrow \infty }\left\Vert
y_{n}^{(r)}-y^{(r)}\right\Vert )\right\Vert _{\psi } \\
& +\left\Vert (\left\Vert y^{(1)}\right\Vert ,...,\left\Vert
y^{(r)}\right\Vert )\right\Vert _{\psi }-\varepsilon \leq
\lim_{n,m\rightarrow \infty ,n\neq m}\left\Vert z_{n}-z_{m}\right\Vert
_{\psi } \\
& +\liminf_{n\rightarrow \infty }\left\Vert z_{n}-w_{n}\right\Vert _{\psi
}-\varepsilon \leq t+1-t-\varepsilon \leq 1-\varepsilon
\end{align*}%
which contradicts (\ref{eps}).
\end{proof}

We can now formulate our main result. The proof combines the arguments of
\cite[Theorem 3.4]{Wi2} and Lemma \ref{lem}.

\begin{theorem}
\label{Th1}Let $X_{1},...,X_{r}$ be Banach spaces with $M(X_{i})>1$ for each
$i=1,...,r$, $\psi \in \Psi _{r}.$ Then the direct sum $(X_{1}\oplus
...\oplus X_{r})_{\psi }$ with a strictly monotone norm $\Vert \cdot \Vert
_{\psi }$ has the WFPP property.
\end{theorem}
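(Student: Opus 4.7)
The plan is to proceed by contradiction, combining the arguments of \cite[Theorem 3.4]{Wi2} with the two new ingredients Lemma \ref{lem} and Lemma \ref{close2}. Assuming $(X_{1}\oplus\ldots\oplus X_{r})_{\psi}$ lacks WFPP, I would produce via a Kuratowski--Zorn argument a weakly compact convex minimal $T$-invariant subset $K$ with $\diam K>0$, rescale to $\diam K=1$, and translate so that $0\in K$. Minimality together with weak compactness of $K$ then yields an approximate fixed point sequence $(w_{n})=((x_{n}^{(1)},\ldots,x_{n}^{(r)}))\subset K$ with $w_{n}\rightharpoonup 0$. Applying Lemma \ref{lem} and relabeling coordinates (which preserves strict monotonicity of $\Vert\cdot\Vert_{\psi}$), I may pass to a subsequence with $\lim_{n}\Vert x_{n}^{(r)}\Vert=0$.

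Next I would invoke Lemma \ref{close2}: for every integer $k\geq 1$ it furnishes a decay sequence $(\varepsilon_{i})\subset(0,1)$, a subsequence $(w_{n_{j}})$, and sets $\{D_{j}^{i}\}_{1\leq j\leq k,\,i\geq 1}$ satisfying (i)--(v), with the crucial extra property that every $u=(y^{(1)},\ldots,y^{(r)})\in\bigcup_{i}D_{k}^{i}$ has $\Vert y^{(r)}\Vert<1/k$. Using the iteration inequality recorded just before Lemma \ref{pom}, namely
\[
\Vert z-u\Vert_{\psi}+\Vert u-w_{n_{i+1}}\Vert_{\psi}\leq\Vert z-w_{n_{i+1}}\Vert_{\psi}+3(j-1)\varepsilon_{i+1},
\]
together with nonexpansiveness of $T$, I would follow the Wi2 Theorem~3.4 machinery: choose a diagonal subsequence along which the componentwise limits $\lim_{n}\Vert y_{n}^{(\ell)}\Vert$ and $\lim_{n\neq m}\Vert y_{n}^{(\ell)}-y_{m}^{(\ell)}\Vert$ all exist, and then estimate $\liminf_{i,i'}\Vert w_{n_{i}}-w_{n_{i'}}\Vert_{\psi}$ from above coordinate by coordinate. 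For each $\ell<r$ the contribution is bounded by an expression of the form $R(a_{\ell},X_{\ell})\cdot d_{\ell}$ arising from a weakly null sequence of norm at most $1$ in $X_{\ell}$ with $D[\cdot]\leq d_{\ell}$, while the $r$-th coordinate contributes at most $1/k$.

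The final step, and the main obstacle, is to turn these coordinatewise bounds into a strict $\Vert\cdot\Vert_{\psi}$-inequality. Since $M(X_{\ell})>1$, Lemma \ref{2.1} gives $R(a_{\ell},X_{\ell})<1+a_{\ell}$ for every $a_{\ell}>0$, so each of the first $r-1$ coordinate estimates beats the trivial bound by a definite margin. I would convert this into a strict $\Vert\cdot\Vert_{\psi}$-estimate using the same uniform-gap principle already exploited in the proof of Lemma \ref{lem}: strict monotonicity of $\Vert\cdot\Vert_{\psi}$ on the finite-dimensional space $(\mathbb{R}^{r},\Vert\cdot\Vert_{\psi})$ provides, by compactness, a modulus $\delta(\cdot)>0$ controlling how much componentwise decrease forces $\Vert\cdot\Vert_{\psi}$-decrease. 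Letting $k\to\infty$ absorbs the $1/k$ contribution of the $r$-th coordinate, and choosing $(\varepsilon_{i})$ to decay sufficiently fast absorbs the $3(j-1)\varepsilon_{i+1}$ accumulated errors; one arrives at $\liminf_{i,i'}\Vert w_{n_{i}}-w_{n_{i'}}\Vert_{\psi}<1$, which contradicts $\diam K=1$ via the Goebel--Karlovitz lemma. The delicate bookkeeping in this last step---choosing $k$, the $\varepsilon_{i}$, and the diagonal subsequence in the right order so that every quantitative gap survives the many limits---is precisely where the main work lies.
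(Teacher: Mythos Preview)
Your setup through the invocation of Lemma~\ref{lem} and Lemma~\ref{close2} matches the paper. The divergence, and the gap, is in what you do with the family $\{D_{j}^{i}\}$ afterwards.

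You propose to re-enter the $R(a,X_{\ell})$ machinery and bound $\liminf_{i\neq i'}\Vert w_{n_{i}}-w_{n_{i'}}\Vert_{\psi}$ coordinate by coordinate, obtaining a strict inequality $<1$ and claiming this contradicts the Goebel--Karlovitz lemma. Two problems. First, Goebel--Karlovitz says $\lim_{n}\Vert w_{n}-x\Vert=\diam K$ for each \emph{fixed} $x\in K$; it says nothing about pairwise distances within a single approximate fixed point sequence, so the inequality you aim for is not a contradiction. Second, the mechanism by which $R(a_{\ell},X_{\ell})$ would bound the $\ell$-th coordinate is not there: to invoke $R(a,X_{\ell})$ you need a weakly null sequence plus a \emph{nonzero} fixed vector, and the components $x_{n}^{(\ell)}$ of $(w_{n})$ are weakly null with weak limit $0$. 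The auxiliary sequence $(z_{n})$ with nontrivial weak limit $z$ that makes $R(a,X)$ bite was already constructed and consumed \emph{inside} the proof of Lemma~\ref{lem}; nothing analogous survives to this stage.

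The paper's argument after Lemma~\ref{close2} is completely different and does not touch $R(a,X_{\ell})$ again. One sets $C_{0}=\{0\}$, $C_{j}=\conv(C_{j-1}\cup T(C_{j-1}))$, observes that $\cl\bigl(\bigcup_{j}C_{j}\bigr)$ is a closed convex $T$-invariant subset of $K$ and hence equals $K$ by minimality, and then shows $C_{j}\subset\cl\bigl(\bigcup_{i}D_{k}^{i}\bigr)$ whenever $j<k$. Since every point of $\bigcup_{i}D_{k}^{i}$ has $r$-th coordinate of norm $<1/k$, letting $k\to\infty$ forces $y^{(r)}=0$ for every $(y^{(1)},\ldots,y^{(r)})\in K$. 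Thus $K$ actually lives in $(X_{1}\oplus\ldots\oplus X_{r-1}\oplus\{0\})_{\psi}$, which is isometric to an $(r-1)$-fold $\psi'$-direct sum with strictly monotone norm. Iterating $r-1$ times reduces $K$ to a subset of a single $X_{j_{0}}$, and $M(X_{j_{0}})>1$ then gives the fixed point. The iteration inequality you quote (the one preceding Lemma~\ref{pom}) is used only inside the proof of Lemma~\ref{close2}, not in this final step.
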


\begin{proof}
Assume that $(X_{1}\oplus ...\oplus X_{r})_{\psi }$ does not have WFPP.
Then, there exist a weakly compact convex subset $C$ of $(X_{1}\oplus
...\oplus X_{r})_{\psi }$ and a nonexpansive mapping $T:C\rightarrow C$
without a fixed point. By the Kuratowski-Zorn lemma, there exists a convex
and weakly compact set $K\subset C$ which is minimal invariant under $T$ and
which is not a singleton. Let $(w_{n})=((x_{n}^{(1)},...,x_{n}^{(r)}))$ be
an approximate fixed point sequence for $T$ in $K.$ Without loss of
generality we can assume that $\diam K=1,$ $(w_{n})$ converges weakly to $%
(0,...,0)\in K$ and the limits $\lim_{n,m\rightarrow \infty ,n\neq m}\Vert
w_{n}-w_{m}\Vert _{\psi },$ $\lim_{n\rightarrow \infty }\Vert
x_{n}^{(i)}\Vert ,i=1,...,r,$ exist. Applying Lemma \ref{lem} gives $%
\lim_{n\rightarrow \infty }\Vert x_{n}^{(i_{0})}\Vert =0$ for some $i_{0}\in
\{1,...,r\}$ and by rearrangement of Banach spaces we can assume that $%
i_{0}=r.$ Now we follow the arguments in \cite{Wi2}. Lemma \ref{close2}
shows that for every positive integer $k,$ there exist a sequence $%
(\varepsilon _{n})$ in $(0,1),$ a subsequence $(w_{n_{j}})$ of $(w_{n})$ and
a family $\left\{ D_{j}^{i}\right\} _{1\leq j\leq k,i\geq 1}$ of subsets of $%
K$ (depending on $k$) such that the conditions (i)-(v) are satisfied and $%
\left\Vert y^{(r)}\right\Vert <\frac{1}{k}$ for every $%
u=(y^{(1)},...,y^{(r)})\in \bigcup\nolimits_{i\mathbb{=}1}^{\infty
}D_{k}^{i}.$

Let $C_{0}=\{(0,...,0)\}$ and $C_{j}=\conv(C_{j-1}\cup T(C_{j-1}))$ for $%
j\geq 1.$ It is not difficult to see that $\cl(\bigcup\nolimits_{j\mathbb{=}%
1}^{\infty }C_{j})$ is a closed convex subset of $K$ which is invariant for $%
T$ (and hence equals $K$). Fix $k\geq 1$ and notice that $(0,...,0)\in \cl%
(\bigcup\nolimits_{i\mathbb{=}1}^{\infty }D_{1}^{i})$, because the sequence $%
(w_{n_{j}})_{j\geq 1}$ converges weakly to $(0,...,0).$ Furthermore, for $%
j<k,$
\begin{equation*}
T(\cl(\bigcup\nolimits_{i\mathbb{=}1}^{\infty }D_{j}^{i}))=\cl%
(\bigcup\nolimits_{i\mathbb{=}1}^{\infty }T(D_{j}^{i}))\subset \cl%
(\bigcup\nolimits_{i\mathbb{=}1}^{\infty }D_{j+1}^{i})
\end{equation*}%
and hence, by induction on $j,$
\begin{equation*}
C_{j}\subset \cl(\bigcup\nolimits_{i\mathbb{=}1}^{\infty
}D_{j+1}^{i})\subset \cl(\bigcup\nolimits_{i\mathbb{=}1}^{\infty
}D_{k}^{i}),\ j<k.
\end{equation*}%
It follows that if $(y^{(1)},...,y^{(r)})\in C_{j}$ and $j<k,$ then $%
(y^{(1)},...,y^{(r)})\in \cl(\bigcup\nolimits_{i\mathbb{=}1}^{\infty
}D_{k}^{i})$ and consequently, $\left\Vert y^{(r)}\right\Vert \leq \frac{1}{k%
}.$ Letting $k\rightarrow \infty ,$ we have $y^{(r)}=0$ for every $%
(y^{(1)},...,y^{(r)})\in \cl(\bigcup\nolimits_{j\mathbb{=}1}^{\infty
}C_{j})=K.$ Therefore, $K$ is a subset of $(X_{1}\oplus ...\oplus
X_{r-1}\oplus \{0\})_{\psi }$ which is isometric to $(X_{1}\oplus ...\oplus
X_{r-1})_{\psi ^{\prime }}$ with the strictly monotone norm $\Vert
(x_{1},...,x_{r-1})\Vert _{\psi ^{\prime }}=\Vert (x_{1},...,x_{r-1},0)\Vert
_{\psi }.$ Repeating the above procedure $r-1$ times, we deduce that $K$ is
isometric to some $X_{j_{0}},j_{0}\in \{1,...,r\}.$ Since $M(X_{j_{0}})>1,$ $%
T$ has a fixed point in $K,$ which contradicts our assumption.
\end{proof}

Garc\'{\i}a Falset, Llor\'{e}ns Fuster and Mazcu\~{n}an Navarro \cite{GLM}
introduced modulus $RW(a,X),$ which plays an important role in fixed point
theory for nonexpansive mappings. For a given $a\geq 0,$%
\begin{equation*}
RW(a,X)=\sup \min \{\liminf_{n}\left\Vert x_{n}+x\right\Vert
,\liminf_{n}\left\Vert x_{n}-x\right\Vert \},
\end{equation*}%
where the supremum is taken over all $x\in X$ with $\left\Vert x\right\Vert
\leq a$ and all weakly null sequences in the unit ball $B_{X}.$ Let
\begin{equation*}
MW(X)=\sup \left\{ \frac{1+a}{RW(a,X)}:a\geq 0\right\} .
\end{equation*}%
It was proved in \cite[Theorem 3.3]{GLM} that $M(X)\geq MW(X)$ whenever $%
B_{X^{\ast }}$ is $w^{\ast }$-sequentially compact. Since a minimal
invariant set $K$ is always separable, we obtain the following corollary.

\begin{corollary}
Let $X_{1},...,X_{r}$ be Banach spaces with $MW(X_{i})>1$ for each $%
i=1,...,r $, $\psi \in \Psi _{r}.$ Then the direct sum $(X_{1}\oplus
...\oplus X_{r})_{\psi }$ with a strictly monotone norm $\Vert \cdot \Vert
_{\psi }$ has WFPP.
\end{corollary}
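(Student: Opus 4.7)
This corollary is set up as a separable reduction to Theorem~\ref{Th1}. The inequality $M(X)\geq MW(X)$ of \cite[Theorem~3.3]{GLM} requires $w^{\ast}$-sequential compactness of $B_{X^{\ast}}$, which is automatic for separable $X$; meanwhile, any minimal invariant set $K$ for a nonexpansive map can be recast inside a separable closed subspace of the ambient direct sum. So the corollary is obtained by passing to coordinate subspaces generated by $K$ and invoking Theorem~\ref{Th1} there.

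Arguing by contradiction, I would assume WFPP fails and, via the Kuratowski--Zorn lemma, extract a minimal $T$-invariant weakly compact convex subset $K$ of the ambient direct sum which is not a singleton. Such a $K$ is separable: starting from any $x_{0}\in K$, the iterates $C_{0}=\{x_{0}\}$, $C_{j}=\conv(C_{j-1}\cup T(C_{j-1}))$ are all separable, and $\cl(\bigcup_{j\geq 1}C_{j})$ is a closed convex $T$-invariant subset of $K$, hence equals $K$ by minimality. Letting $\pi_{i}$ denote the $i$-th coordinate projection, I set $Y_{i}$ to be the closed linear span of $\pi_{i}(K)$ in $X_{i}$; each $Y_{i}$ is then a separable closed subspace of $X_{i}$, so $B_{Y_{i}^{\ast}}$ is $w^{\ast}$-sequentially compact.

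Next I would observe that $MW$ is monotone under passage to a closed subspace: the supremum defining $RW(a,Y_{i})$ ranges over a subcollection of the vectors and weakly null sequences admissible for $RW(a,X_{i})$, so $RW(a,Y_{i})\leq RW(a,X_{i})$, whence $MW(Y_{i})\geq MW(X_{i})>1$. Combining with \cite[Theorem~3.3]{GLM} applied to $Y_{i}$ yields $M(Y_{i})\geq MW(Y_{i})>1$ for every $i$. Since $(Y_{1}\oplus \ldots \oplus Y_{r})_{\psi }$ is a closed subspace of $(X_{1}\oplus \ldots \oplus X_{r})_{\psi }$ carrying the very same strictly monotone norm $\Vert \cdot \Vert _{\psi }$ and contains $K$, Theorem~\ref{Th1} delivers a fixed point of $T\colon K\to K$, contradicting our assumption.

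All the steps are essentially bookkeeping, with Theorem~\ref{Th1} doing the heavy lifting. The only place demanding a moment of care is the monotonicity $MW(Y_{i})\geq MW(X_{i})$ under closed subspaces; this is however immediate once one remembers that weakly null sequences in $B_{Y_{i}}$ remain weakly null in $B_{X_{i}}$, and that restriction keeps the candidate set for the supremum in $RW$ smaller.
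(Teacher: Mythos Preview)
Your argument is correct and follows essentially the same route as the paper. The paper only records the one-line justification ``Since a minimal invariant set $K$ is always separable'' together with the inequality $M(X)\geq MW(X)$ from \cite[Theorem~3.3]{GLM}; your write-up simply makes explicit the intended separable reduction (constructing the $Y_i$, checking $RW(a,Y_i)\leq RW(a,X_i)$, and invoking Theorem~\ref{Th1} on $(Y_1\oplus\cdots\oplus Y_r)_\psi$), which is exactly what the paper's hint asks the reader to supply.
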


Recall that a Banach space $X$ is uniformly nonsquare if
\begin{equation*}
J(X)=\sup_{x,y\in S_{X}}\min \left\{ \left\Vert x+y\right\Vert ,\left\Vert
x-y\right\Vert \right\} <2.
\end{equation*}%
Corollary 4.4 in \cite{GLM} shows that if $X$ is uniformly nonsquare, then $%
MW(X)>1.$ Since uniformly nonsquare spaces are reflexive, properties FPP and
WFPP coincide and thus we obtain the following corollary.

\begin{corollary}
\label{cor2}Let $X_{1},...,X_{r}$ be uniformly nonsquare Banach spaces, $%
\psi \in \Psi _{r}$. Then the direct sum $(X_{1}\oplus ...\oplus
X_{r})_{\psi }$ with a strictly monotone norm $\Vert \cdot \Vert _{\psi }$
has FPP.
\end{corollary}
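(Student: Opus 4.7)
The plan is to reduce the corollary to the preceding one by invoking two classical facts about uniformly nonsquare spaces. First, by Corollary 4.4 of \cite{GLM}, the assumption $J(X_{i})<2$ forces $MW(X_{i})>1$ for each $i=1,\dots,r$. Hence the hypotheses of the preceding corollary are met, and we conclude that $(X_{1}\oplus\dots\oplus X_{r})_{\psi}$ has WFPP.

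Next I would upgrade WFPP to FPP. A theorem of James (proved via the $J(X)$ characterization of super-reflexivity) guarantees that every uniformly nonsquare Banach space is (super-)reflexive. Since a finite $\psi$-direct sum of reflexive spaces is reflexive (its closed unit ball is weakly compact, being a closed bounded convex subset isomorphic to a product of weakly compact sets), the space $(X_{1}\oplus\dots\oplus X_{r})_{\psi}$ is itself reflexive. In a reflexive Banach space every bounded closed convex subset is weakly compact, so WFPP and FPP coincide, and the desired conclusion follows.

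The argument is essentially bookkeeping: once Theorem \ref{Th1} (and its $MW$-version in the unnamed corollary above) is in hand, the only ingredients needed are the implications ``uniformly nonsquare $\Rightarrow MW(X)>1$'' and ``uniformly nonsquare $\Rightarrow$ reflexive''. There is no genuine obstacle; the only point one should state carefully is that reflexivity is preserved by the $\psi$-direct sum for $\psi\in\Psi_{r}$, which is immediate because $\|\cdot\|_{\psi}$ is equivalent to the $\ell_{\infty}^{r}$-norm on the coordinates and the topology on $X_{1}\times\dots\times X_{r}$ induced by $\|\cdot\|_{\psi}$ agrees with the product topology at the level of weak compactness.
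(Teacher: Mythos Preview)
Your proof is correct and follows essentially the same route as the paper: invoke \cite[Corollary 4.4]{GLM} to get $MW(X_i)>1$, apply the preceding corollary to obtain WFPP, and then use reflexivity of uniformly nonsquare spaces to upgrade WFPP to FPP. The only difference is that you spell out why the $\psi$-direct sum inherits reflexivity, whereas the paper leaves this implicit.
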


In the case $r=2$ we have a stronger, definitive result.

\begin{theorem}
\label{th2}Let $X_{1},X_{2}$ be uniformly nonsquare Banach spaces, $\psi \in
\Psi _{2}$. Then the direct sum $X_{1}\oplus _{\psi }X_{2}$ has FPP.
\end{theorem}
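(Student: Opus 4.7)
My plan is a dichotomy on whether the absolute normalized norm $\|\cdot\|_\psi$ is strictly monotone. If it is, Corollary~\ref{cor2} applied with $r=2$ already yields FPP, so nothing further is needed in that case.

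The substantive case is when $\|\cdot\|_\psi$ is not strictly monotone. By the characterization of \cite[Corollary 3]{TaKaSa2} recalled in Section~2, this forces $\psi(t_0) = \psi_\infty(t_0) = \max\{1-t_0, t_0\} < 1$ for some $t_0 \in (0,1)$, and in particular $\psi \neq \psi_1$ (since $\psi_1 \equiv 1$). I would then invoke the Kato--Tamura theorem \cite{KaTa1} quoted in the introduction: when $\psi \neq \psi_1$, $R(X_1 \oplus_\psi X_2) < 2$ if and only if $R(X_i) < 2$ for $i=1,2$. To verify the right-hand side I would chain \cite[Corollary 4.4]{GLM} (uniform nonsquareness of $X_i$ gives $MW(X_i) > 1$), the reflexivity of $X_i$ (so that $B_{X_i^*}$ is $w^*$-sequentially compact), \cite[Theorem 3.3]{GLM} ($M(X_i) \geq MW(X_i) > 1$), and finally Lemma~\ref{2.1}(c) applied at $a=1$ to conclude $R(X_i) < 2$. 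It follows that $R(X_1 \oplus_\psi X_2) < 2$, and Garc\'{\i}a Falset's theorem from \cite{Ga1} gives WFPP for the direct sum.

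The upgrade from WFPP to FPP is immediate: a finite $\psi$-direct sum of reflexive spaces is reflexive and uniformly nonsquare spaces are reflexive, so $X_1 \oplus_\psi X_2$ is reflexive and WFPP coincides with FPP. I do not foresee a genuine obstacle here; the proof is essentially a clean case split combining the present paper's Corollary~\ref{cor2} with the pre-existing Kato--Tamura identity to cover precisely those $\psi \in \Psi_2$ that fall outside the strictly monotone regime. The only mild point worth double-checking is that failure of strict monotonicity of $\psi \in \Psi_2$ does imply $\psi \neq \psi_1$, and this is immediate from the Section~2 characterization, since $\psi_1 \equiv 1$ lies pointwise strictly above $\psi_\infty$ on $(0,1)$.
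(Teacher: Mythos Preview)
Your dichotomy is valid and gives a correct proof, but it is organised differently from the paper's. The paper runs a three-way split: for $\psi\notin\{\psi_1,\psi_\infty\}$ it invokes \cite[Theorem~1]{KaSaTa} (and \cite{BePr}) to conclude that $X_1\oplus_\psi X_2$ is itself uniformly nonsquare, hence has FPP by \cite{GLM}; for $\psi=\psi_\infty$ it cites \cite[Theorem~3.6]{KaTa1} to obtain $R(X_1\oplus_\infty X_2)<2$; and only $\psi=\psi_1$ is handled via Corollary~\ref{cor2}. Your two-case split instead dispenses with \cite{KaSaTa} altogether and pushes the full Kato--Tamura equivalence (valid for every $\psi\neq\psi_1$) across the entire non--strictly-monotone range. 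That is a cleaner partition, and it buys you not having to appeal to the uniform-nonsquareness-of-the-sum result at all; the paper's route, on the other hand, needs the $R$-coefficient machinery only at the single point $\psi_\infty$.

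One step in your chain needs care. Lemma~\ref{2.1} concerns the Dom\'{\i}nguez Benavides modulus $R(a,X)$, whose definition carries the extra constraint $D[(x_n)]\leq 1$; applying part~(c) at $a=1$ yields $R(1,X_i)<2$, which is \emph{not} the same as Garc\'{\i}a Falset's coefficient $R(X_i)<2$ (in general $R(1,X)\leq R(X)$, with strict inequality already in~$\ell_2$). Thus the chain $MW(X_i)>1\Rightarrow M(X_i)>1\Rightarrow R(1,X_i)<2$ does not by itself deliver the hypothesis $R(X_i)<2$ that the Kato--Tamura equivalence from \cite{KaTa1} actually requires. You need an independent justification that uniform nonsquareness forces $R(X_i)<2$; the paper does not argue this explicitly either but absorbs it into the citation of \cite[Theorem~3.6]{KaTa1} for the $\psi_\infty$ case. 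Once that point is sourced properly, your argument goes through.
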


\begin{proof}
Kato, Sato and Tamura \cite[Theorem 1]{KaSaTa} proved that $X_{1}\oplus
_{\psi }X_{2}$ is uniformly nonsquare iff $X_{1},X_{2}$ are uniformly
nonsquare and $\psi \neq \psi _{1},\psi _{\infty }$ (see also \cite{BePr}).
Kato and Tamura \cite[Theorem 3.6]{KaTa1} showed that $R(X_{1}\oplus
_{\infty }X_{2})<2$ and hence $X_{1}\oplus _{\infty }X_{2}$ has FPP. The
case $X_{1}\oplus _{1}X_{2}$ is covered by Corollary \ref{cor2}, since the
norm $\Vert \cdot \Vert _{\psi _{1}}$ is strictly monotone.
\end{proof}

We conclude with two fixed point theorems for asymptotically nonexpansive
mappings. Recall that a mapping $T:C\rightarrow C$ is said to be
asymptotically nonexpansive in the intermediate sense if $T$ is continuous
and%
\begin{equation*}
\limsup_{n\rightarrow \infty }\sup_{x,y\in C}(\Vert T^{n}x-T^{n}y\Vert
-\Vert x-y\Vert )\leq 0.
\end{equation*}%
A Banach space $X$ is said to have the super fixed point property for
nonexpansive mappings (resp. asymptotically nonexpansive mappings) if every
Banach space $Y$ which is finitely representable in $X$ has the fixed point
property for nonexpansive mappings (resp. asymptotically nonexpansive
mappings).

Theorem 2.4 in \cite{Wi3} shows that $X$ has the super fixed point property
for nonexpansive mappings if and only if $X$ has the super fixed point
property for asymptotically nonexpansive mappings in the intermediate sense.
Since the direct sum $(X_{1}\oplus ...\oplus X_{r})_{\psi }$ of uniformly
nonsquare spaces is stable under passing to the Banach space ultrapowers, it
follows from the properties of ultrapowers and Corollary \ref{cor2} (resp.
Theorem \ref{th2}) that $(X_{1}\oplus ...\oplus X_{r})_{\psi }$ with a
strictly monotone norm (resp. $X_{1}\oplus _{\psi }X_{2}$ with any monotone
norm) has the super fixed property for nonexpansive mappings and,
consequently, for asymptotically nonexpansive mappings in the intermediate
sense. Thus we obtain the following theorem.

\begin{theorem}
\label{asympt}Let $X_{1},...,X_{r}$ be uniformly nonsquare Banach spaces, $%
\psi \in \Psi _{r}$. Then the direct sum $(X_{1}\oplus ...\oplus
X_{r})_{\psi }$ with a strictly monotone norm $\Vert \cdot \Vert _{\psi }$
has the (super) fixed point property for asymptotically nonexpansive
mappings in the intermediate sense. The assumption about the strict
monotonicity of the norm can be dropped if $r=2$.
\end{theorem}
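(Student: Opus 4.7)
The plan is to deduce Theorem~\ref{asympt} from Corollary~\ref{cor2} (or, when $r=2$, from Theorem~\ref{th2}) via a two-step promotion: first, from FPP to the \emph{super} fixed point property for nonexpansive mappings, and then, via \cite[Theorem~2.4]{Wi3}, from the super FPP for nonexpansive mappings to the super FPP for asymptotically nonexpansive mappings in the intermediate sense. The latter implies in particular the ordinary FPP for such maps on the space itself, which is what the theorem asserts.

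For the first step I would fix a free ultrafilter $\mathcal{U}$ and verify the canonical isometric identification
\[
\bigl((X_{1}\oplus\cdots\oplus X_{r})_{\psi}\bigr)_{\mathcal{U}}\;\cong\;\bigl((X_{1})_{\mathcal{U}}\oplus\cdots\oplus(X_{r})_{\mathcal{U}}\bigr)_{\psi},
\]
which is immediate because $\psi$ is a fixed continuous function on the finite-dimensional simplex $\triangle_{r}$ and may therefore be passed through the ultralimit componentwise. Next, uniform non-squareness is preserved by ultrapowers (the coefficient $J(\cdot)$ is defined by a supremum over the unit sphere, so $J(X_{\mathcal{U}})=J(X)$), whence each $(X_{i})_{\mathcal{U}}$ is again uniformly nonsquare. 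Corollary~\ref{cor2} (resp.\ Theorem~\ref{th2} when $r=2$) then applies to the ultrapower and yields FPP. Since any Banach space $Y$ finitely representable in $(X_{1}\oplus\cdots\oplus X_{r})_{\psi}$ embeds isometrically into some such ultrapower, and FPP descends to closed subspaces (a bounded closed convex set in $Y$ is also one in the ambient space, and a nonexpansive self-map of it remains nonexpansive), $Y$ itself has FPP. This establishes the super FPP for nonexpansive maps, and \cite[Theorem~2.4]{Wi3} then concludes the proof.

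The main obstacle I anticipate is the special case $r=2$ with $\psi=\psi_{\infty}$, covered by the last sentence of Theorem~\ref{th2}: by \cite[Theorem~1]{KaSaTa} the space $X_{1}\oplus_{\infty}X_{2}$ with uniformly nonsquare summands is \emph{not} itself uniformly nonsquare, so one cannot close the ultrapower argument through the coefficient $J(\cdot)$. Instead one must invoke the Kato--Tamura identity $R(X_{1}\oplus_{\infty}X_{2})=\max\{R(X_{1}),R(X_{2})\}$ together with the ultrapower-stability of the Garc\'ia Falset coefficient $R(\cdot)$ (using that minimal invariant sets are separable) to see that the condition $R(\cdot)<2$ survives in the ultrapower, which is precisely what the proof of Theorem~\ref{th2} requires in this residual case.
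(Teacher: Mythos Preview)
Your approach is essentially the same as the paper's: pass to an ultrapower, identify it with the $\psi$-direct sum of the ultrapowers, observe that uniform nonsquareness is preserved, apply Corollary~\ref{cor2} (resp.\ Theorem~\ref{th2}) to obtain FPP for the ultrapower, and then invoke \cite[Theorem~2.4]{Wi3}. This is correct.

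Your final paragraph, however, manufactures a difficulty that is not there and proposes a dubious fix. You worry that in the $\psi_{\infty}$ case the sum $(X_{1})_{\mathcal{U}}\oplus_{\infty}(X_{2})_{\mathcal{U}}$ is not uniformly nonsquare, and suggest rescuing the argument via ``ultrapower-stability of the Garc\'{\i}a Falset coefficient $R(\cdot)$''. But $R(\cdot)$ is defined through weakly null sequences and is \emph{not} known to be stable under ultrapowers; your parenthetical about minimal invariant sets being separable does not address this at all. Fortunately you do not need it: you have already shown that each $(X_{i})_{\mathcal{U}}$ is uniformly nonsquare, and Theorem~\ref{th2} requires only that the \emph{factors} be uniformly nonsquare, for \emph{any} $\psi\in\Psi_{2}$. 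So Theorem~\ref{th2} applies verbatim to $(X_{1})_{\mathcal{U}}\oplus_{\psi}(X_{2})_{\mathcal{U}}$ with no extra work---this is exactly how the paper argues. Delete the last paragraph and simply say that Theorem~\ref{th2} applies to the ultrapower factors.
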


Recall that Theorem 2.3 in \cite{Wi1} shows that the direct sum $%
(X_{1}\oplus ...\oplus X_{r})_{\psi }$ of uniformly noncreasy spaces with a
strictly monotone norm has FPP. Since uniformly noncreasy spaces are stable
under passing to the Banach space ultrapowers (see \cite{Pr}), the
conclusion of Theorem \ref{asympt} is also valid in this case.

\begin{theorem}
\label{asympt1}Let $X_{1},...,X_{r}$ be uniformly noncreasy Banach spaces, $%
\psi \in \Psi _{r}$. Then the direct sum $(X_{1}\oplus ...\oplus
X_{r})_{\psi }$ with a strictly monotone norm $\Vert \cdot \Vert _{\psi }$
has the super fixed point property for asymptotically nonexpansive mappings
in the intermediate sense.
\end{theorem}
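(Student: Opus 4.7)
The plan is to mirror the argument given for Theorem~\ref{asympt}, replacing the input "uniformly nonsquare + Corollary~\ref{cor2}" by "uniformly noncreasy + Theorem 2.3 of \cite{Wi1}". Concretely, I would prove super FPP for nonexpansive mappings first, and then invoke Theorem~2.4 of \cite{Wi3} to transfer the conclusion to asymptotically nonexpansive mappings in the intermediate sense.

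To establish super FPP for nonexpansive mappings, I would fix a Banach space $Y$ that is finitely representable in $(X_{1}\oplus\ldots\oplus X_{r})_{\psi}$. By the standard characterisation, $Y$ embeds isometrically into some Banach space ultrapower $Z=\bigl((X_{1}\oplus\ldots\oplus X_{r})_{\psi}\bigr)_{\mathcal{U}}$. The key structural observation is that taking a $\psi$-direct sum commutes with taking ultrapowers: the natural map
\[
\bigl[(x_{n}^{(1)},\ldots,x_{n}^{(r)})\bigr]_{\mathcal{U}}\mapsto\bigl([x_{n}^{(1)}]_{\mathcal{U}},\ldots,[x_{n}^{(r)}]_{\mathcal{U}}\bigr)
\]
is an isometric isomorphism from $Z$ onto $\bigl((X_{1})_{\mathcal{U}}\oplus\ldots\oplus(X_{r})_{\mathcal{U}}\bigr)_{\psi}$, which follows from the fact that $\Vert\cdot\Vert_{\psi}$ depends only on the coordinate norms and that $\psi$ is continuous on the compact simplex $\triangle_{r}$.

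Next, since uniform noncreasiness is preserved under ultrapowers by \cite{Pr}, each coordinate ultrapower $(X_{i})_{\mathcal{U}}$ is again uniformly noncreasy. The strict monotonicity of $\Vert\cdot\Vert_{\psi}$ is a property of the function $\psi$ alone and is inherited by the direct sum $\bigl((X_{1})_{\mathcal{U}}\oplus\ldots\oplus(X_{r})_{\mathcal{U}}\bigr)_{\psi}$. Theorem~2.3 of \cite{Wi1} therefore applies and yields that $Z$ has FPP. Since any bounded closed convex subset $C$ of $Y$ is also a bounded closed convex subset of $Z$, and any nonexpansive $T\colon C\to C$ remains nonexpansive when $C$ is viewed inside $Z$, FPP passes from $Z$ to $Y$. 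Hence $(X_{1}\oplus\ldots\oplus X_{r})_{\psi}$ has the super fixed point property for nonexpansive mappings, and Theorem~2.4 of \cite{Wi3} upgrades this to the super fixed point property for asymptotically nonexpansive mappings in the intermediate sense.

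No single step here is particularly hard: the substantive content sits in the cited results (noncreasiness of ultrapowers in \cite{Pr}, the FPP theorem in \cite{Wi1}, and the super-FPP equivalence in \cite{Wi3}). The only point that requires a short verification, and hence the most likely place to slip, is the isometric identification of the ultrapower of the $\psi$-direct sum with the $\psi$-direct sum of the ultrapowers; but this is a routine consequence of the definition of $\Vert\cdot\Vert_{\psi}$ and the continuity of $\psi$, so I would treat it as "it is straightforward to check" rather than writing it out in full.
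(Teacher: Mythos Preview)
Your proposal is correct and follows essentially the same line as the paper: the paper simply observes that the $\psi$-direct sum of uniformly noncreasy spaces is stable under ultrapowers (since noncreasiness is preserved by \cite{Pr} and the ultrapower of a $\psi$-direct sum is the $\psi$-direct sum of the ultrapowers), invokes Theorem~2.3 of \cite{Wi1} to obtain super FPP for nonexpansive mappings, and then applies Theorem~2.4 of \cite{Wi3}. Your write-up merely makes the ultrapower identification explicit, which the paper leaves implicit in the phrase ``stable under passing to the Banach space ultrapowers.''
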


\begin{acknowledgement}
The author wishes to express his thanks to Professor Tom\'{a}s Dom\'{\i}%
nguez Benavides for fruitful conversations regarding the subject of this
paper.
\end{acknowledgement}

\end{document}